\theoremstyle{plain}
\newtheorem{thm}{Theorem}
\theoremstyle{definition}
\theoremstyle{remark}
\numberwithin{equation}{section}
\newtheorem{ex}[thm]{Example}
\newcommand{\A}{\mathbb{A}}
\newcommand{\lgw}{\longrightarrow}
\newcommand{\ovl}{\overline}
\newcommand{\K}{\mathbb{K}}
\newcommand{\N}{\mathbb{N}}
\newcommand{\Q}{\mathbb{Q}}
\renewcommand{\a}{\alpha}
\renewcommand{\b}{\beta}
\let\mathscr\mathcal
\begin{document}
%
%%%%%
\title{Remark on a theorem of H. Hauser on textile maps}
\author{Guillaume Rond}
\email{guillaume.rond@univ-amu.fr}
\address{Aix Marseille Univ, CNRS, Centrale Marseille, I2M, Marseille, France}

\begin{abstract}
We give a counter example to the new theorem that appeared in the survey \cite{H} on Artin approximation. We then provide a correct statement and a proof of it.
\end{abstract}
\maketitle
\setlength{\parindent}{0cm}

The aforementioned theorem is the following one: \\
\\
\textbf{Strong Approximation Theorem for Textile Maps.} \cite{H} \emph{Assume that $\K$ is an algebraically closed field, and let $x$ denote again a vector of variables. Let $G : \K[[x]]^m \lgw \K[[x]]^q$ be a textile map. There exists an $l\in\N$ depending on $G$ such that, if $G = 0$ admits an approximate solution $y(x)\in \K[[x]]^m$ up to degree $l$,
$$G(y(x)) \equiv 0 \text{ mod }(x)^l,$$
 then there exists an exact solution $y(x) \in \K[[x]]^m$:
$G(y(x)) = 0.$}\\
 \\
Let us recall that the map $G$ is textile if $G(y(x))$ is a vector of power series whose coefficients are polynomials in the coefficients of $y(x)$.\\
This statement is incorrect as shown by the following example:\\
\begin{ex} We set $\K=\ovl\Q$.
Since $\ovl\Q$ is countable we may list its elements as $\a_0$, $\a_1$, \ldots, $\a_l$, \ldots.
Let $x$ be a single variable and let $G:\ovl\Q[[x]]\lgw\ovl\Q[[x]]$ be the textile map defined by
$$G\left(\sum_{k\geq 0}y_kx^k\right)=\sum_{l\geq 1}\left(\left(y_0-\a_{l-1}\right)y_l-1\right)x^l.$$
For every integer $N\geq 1$ let us set
$$y_N(x)=\a_N+\sum_{k=1}^{N}\frac{1}{\a_N-\a_{k-1}}x^k.$$
Then $G(y_N(x))\equiv 0$ modulo $(z)^{N+1}$. But there is no $y(x)=\sum_{k\geq 0}y_kx^k$ with $G(y(x))=0$. Indeed if such $y(x)$ would exist then we would have
\begin{equation}\label{eq}(y_0-\a_{l-1})y_l=1\ \ \ \forall l\geq 1.\end{equation}
But $y_0\in\ovl\Q$ so $y_0=\a_{l_0}$ for some $l_0\geq 0$. Thus \eqref{eq} for $l=l_0+1$ would give
$$0=(y_0-\a_{l_0+1-1})y_{l_0+1}=1$$
which is impossible.\end{ex}

In fact with  the additional assumption that $\K$ is uncountable the theorem is true:

\begin{thm}
The Strong Approximation Theorem for Textile Maps holds when $\K$ is a uncountable algebraically closed field.
\end{thm}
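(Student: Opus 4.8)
The plan is to exploit the fact that a textile map $G$ is determined by countably many polynomial conditions, each of which involves only finitely many of the coefficients $y_k$, and to run a compactness-type argument using the uncountability of $\K$ to upgrade an approximate solution to an exact one.

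\textbf{Step 1: Reformulate $G=0$ as a countable system of polynomial equations.} Writing $y(x)=\sum_{k\ge 0} y_k x^k$ with the $y_k$ regarded as indeterminates (vectors of indeterminates, since $m$ may be $>1$), the condition $G(y(x))=0$ is equivalent to $P_n(y_0,\dots,y_{d(n)})=0$ for all $n\ge 0$, where each $P_n$ is a polynomial with coefficients in (a finitely generated subfield, or the prime field, of) $\K$, and $d(n)$ is a nondecreasing function of $n$. The hypothesis that $y$ is an approximate solution up to degree $l$ says that the $P_n$ for $n<l$ vanish at a point of $\K^{N}$ (for suitable $N$). I would phrase the existence of an exact solution as: the ideal $I=(P_n : n\ge 0)$ in the polynomial ring $\K[y_k : k\ge 0]$ has a $\K$-point, i.e.\ the corresponding pro-algebraic (inverse limit of affine) scheme is nonempty.

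\textbf{Step 2: Produce the right $l$ via Noetherianity of each truncation and a Chevalley/constructibility argument.} For each $j$, consider the ideal $I_j$ generated by all $P_n$ that only involve $y_0,\dots,y_j$; this lives in the Noetherian ring $\K[y_0,\dots,y_j]$, so $I_j$ is finitely generated, say by $P_0,\dots,P_{n_j}$ after reindexing. The image of the projection $V(I_{j+1})\to V(I_j)$ (forgetting the last block of coordinates) is constructible by Chevalley's theorem. The descending/ascending behavior of these constructible images stabilizes: I would argue that there is some $l$ such that, for all $j$, the image of $V(I)\to \mathbb{A}^{j}$ (the ``liftable'' approximate solutions) already coincides with the image of $V(I_{\le l})\to \mathbb{A}^{j}$ once we track enough stages — this is where one must be careful, and it is essentially the classical argument behind strong Artin approximation over an arbitrary base, adapted here to the purely coefficient-wise (textile) setting, where no analytic or Weierstrass input is available. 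The candidate $l$ is extracted from finitely many applications of Noetherianity and Chevalley.

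\textbf{Step 3: Use uncountability to pass from ``approximate at level $l$'' to ``exact''.} Given an approximate solution up to degree $l$, Step 2 guarantees a compatible system of partial solutions $(y_0,\dots,y_j)\in V(I_j)(\K)$ for every $j$, or at least that each finite truncation $V(I_j)$ is nonempty and the projections between them are dominant onto the relevant constructible sets. The obstruction to taking an inverse limit is that an infinite descending chain of nonempty constructible subsets of affine spaces over a field can have empty intersection (exactly the phenomenon in Example, where $\K=\ovl\Q$). Here uncountability saves us: at each stage the fiber of the projection is a nonempty constructible subset of some $\mathbb{A}^{r}_{\K}$, hence either contains a Zariski-open dense subset of a positive-dimensional variety — which has $|\K|$ points, uncountably many — or is a nonempty finite set cut out over the countably generated subfield accumulated so far. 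A counting argument (the set of ``bad'' values excluded at each of the countably many later stages is contained in a proper subvariety, hence omits at most countably many — or more precisely, at most $\max(\aleph_0,\dim)$-worth of — points, while $\K$ is uncountable) lets me choose $y_{j+1}$ at each step avoiding all future obstructions, building an exact solution $y(x)\in\K[[x]]^m$ coordinate by coordinate.

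\textbf{Main obstacle.} The delicate point is Step 2 together with the counting in Step 3: one must control simultaneously that (a) the relevant $l$ depends only on $G$ (uniformity), and (b) at the zero-dimensional stages the finitely many excluded points really do live in a countable set, so that uncountability of $\K$ genuinely breaks the obstruction. Concretely, one has to show that whenever a partial solution is liftable to level $l$ it is liftable to all levels, and that the set of partial solutions at level $j$ that fail to extend is contained in a proper Zariski-closed subset — then the union over the countably many $j$ of these closed ``bad'' loci cannot be all of a positive-dimensional component, and on zero-dimensional components one tracks the finitely generated field of definition directly, as in the Example. Making this quantitative (and independent of the approximate solution) is the heart of the proof.
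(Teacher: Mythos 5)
Your plan follows the same road as the paper's proof (truncate, apply Chevalley to get constructible images, use Noetherianity to stabilize, use uncountability to defeat a countable family of obstructions), but the two points you yourself flag as ``where one must be careful'' and ``the heart of the proof'' are precisely the content of the argument, and they are not carried out. The missing mechanism is this: fix a truncation level $k$ and let $C_N^k$ be the (constructible, by Chevalley) image at level $k$ of the variety of approximate solutions to order $N$; these form a decreasing sequence, their Zariski closures stabilize by Noetherianity to a nonempty closed set; picking an irreducible component $F$ of that stable closure, constructibility forces $F\setminus V_N\subset C_N^k$ for some proper closed $V_N\subsetneq F$, and then the lemma ``an irreducible variety over an uncountable field is not a countable union of proper closed subsets'' gives $\bigcap_N C_N^k\neq\emptyset$; finally one checks these intersections are compatible under truncation and takes an inverse limit. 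Your assertion that ``the set of partial solutions at level $j$ that fail to extend is contained in a proper Zariski-closed subset'' is exactly what has to be proved, it is only true relative to an irreducible component of the \emph{stabilized} closure, and without the stabilization step there is nothing to make it proper. In addition, the counting in your Step 3 is wrong as stated: a proper closed subvariety of a positive-dimensional variety over $\C$ has uncountably many points, so the bad locus at each stage does not ``omit at most countably many points''; the correct statement is the one about countable unions of proper closed subsets (which you do state, correctly, only in your final paragraph). Your zero-dimensional case is also a red herring: if the component $F$ is a single point, then $F\setminus V_N\neq\emptyset$ forces $V_N=\emptyset$, so the point lies in every $C_N^k$ and no tracking of fields of definition is needed (nor would it help --- that is exactly the mechanism that \emph{fails} in the countable counterexample).

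A second, more structural gap: you treat the uniformity of $l$ in Step 2 as a delicate quantitative problem, but it is free. Since the conclusion of the theorem does not require the exact solution to approximate the given one, the statement is equivalent to: if $G=0$ admits approximate solutions to every order, then it admits an exact solution. (If there is no exact solution, then by this implication some order $l_0$ admits no approximate solution, and that $l_0$ works vacuously; if there is an exact solution, any $l$ works.) Your Step 2 therefore attempts a harder reduction than necessary, and your Step 3 strategy of choosing coefficients one at a time ``avoiding all future obstructions'' runs into the problem that the set of truncations liftable to \emph{every} finite order is a countable intersection of constructible sets, not itself constructible; the paper handles this by proving directly that these intersections are nonempty and map onto each other under truncation, rather than by a coordinate-by-coordinate avoidance argument.
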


\begin{proof}
The proof given here is essentially Example 0.13 \cite{R}.\\
For a textile map $G:\K[[x]]^m\lgw \K[[x]]^q$ two situations may occur: either $G=0$ has an exact solution, either it does not. So the theorem  is equivalent to say if $G=0$ has no exact solution then there exists a $l_0$ such that $G$ has no approximate solution up to degree $l_0$. So the theorem is equivalent to say that if for every $l$ there exists $y(x)\in\K[[x]]^m$ such that $G(y(x))\in (x)^l$, then there exists an exact solution $y(x)\in\K[[x]]^m$: $G(y(x))=0$. This is what we are going to prove.\\
\\
Let us fix a textile map $G$ which has approximate solutions up to every degree and let $x=(x_1,\ldots,x_n)$. \\
The map $G$ being textile, for any $y(x)=(y_1(x),\ldots,y_m(x))$, with $y_i(x)=\sum_{\a\in\N^n}y_{i,\a}x^\a$, we have that
$$G(y(x))=\left(\sum_{\b\in\N^n} G_{1,\b}x^\b,\ldots, \sum_{\b\in\N^n} G_{q,\b}x^\b\right)$$
where the $G_{j,\b}$ are polynomials in the $y_{i,\a}$. For any $N\in\N$ let $D_N\geq N$ be an integer  such that the polynomials $G_{j,\b}$, for $|\b|< N$, depend only on the $y_{i,\a}$ for $|\a|< D_N$.\\
For every integer $k\geq l$ we consider the truncation maps:
$$\pi_k:\K[[x]]^m\lgw {\K[x]_{< k}}^m$$
$$\pi_{k,l}: {\K[x]_{< k}}^m\lgw {\K[x]_{< l}}^m$$
where $\K[x]_{< k}$ denote the set of polynomials in $x$ of degree $< k$. We identify ${\K[x]_{< k}}^m$ to the affine space $\A_\K^{m\binom{n+k-1}{k-1}}$ by identifying  a vector of polynomials with the vectors of the coefficients of these polynomials. With such an identification the maps $\pi_{k,l}$ correspond to projection maps.\\
For every positive integer $N$ the set
$$X_{N}:=\pi_{D_N}\left(\left\{y(x)\in\K[[x]]^m\mid G(y(x))\in (x)^N\right\}\right)$$
is exactly the affine variety 
$$\left\{y\in \mathbb{A}_\K^{m\binom{n+D_N-1}{D_N-1}}\mid G_{j,\b}(y)=0,\ \ j=1,\ldots,q,\  |\b|<N\right\}.$$
For every positive integers $N\geq k$ we define
$$C_N^k=\pi_{D_N,k}(X_N).$$
These sets are constructible subsets of $\A_\K^{m\binom{n+k-1}{k-1}}$ since $\K$ is algebraically closed (by Chevalley's Theorem). Let us recall that a constructible set is a finite union of sets of the form $W\backslash V$ where $W$ and $V$ are   Zariski closed  subsets of $\A_\K^{m\binom{n+k-1}{k-1}}$.\\
\\
Let us fix $k\geq 1$. Since $(x)^{N+1}\subset (x)^N$ for every positive integer $N$ and $\pi_{D_{N}}=\pi_{D_{N+1},D_N}\circ\pi_{D_{N+1}}$ we have that
$$C_{N+1}^k\subset C_N^k.$$
Thus the sequence $(C_N^k)_N$ is a decreasing sequence of constructible subsets of $\K^{m\binom{n+k-1}{k-1}}$. Let $F_N^k$ denote the Zariski closure of $C_N^k$. Then the sequence $(F_N^k)_N$ is a decreasing sequence of Zariski closed subsets of $\A_\K^{n\binom{n+k-1}{k-1}}$. By Noetherianity this sequence stabilizes, i.e. $F_{N}^k=F_{N_0}^k$ for every $N\geq N_0$ and some positive integer $N_0\geq k$. By assumption $C_{N_0}^k\neq \emptyset$  so $F_{N_0}^k\neq \emptyset$. Let $F$ be an irreducible component of $F_{N_0}^k$. \\
Since $C_N^k$ is constructible,  $C_N^k=\cup_iW^N_i\backslash V^N_i$ for a finite number of Zariski closed sets $W^N_i$ and $V^N_i$ with $W^N_i\backslash V^N_i\neq \emptyset$. So for $N\geq N_0$ we have that
$$F_{N_0}^k=F_N^k=\cup_i W^N_i.$$
 But $F$ being irreducible, for every $N\geq N_0$ one of the $W_i^N$ has to be equal to $F$. Thus for every $N\geq N_0$ there exists a closed proper subset $V_N\subset F$ such that
$$F\backslash V_N\subset C_N^k\ \ \forall N\geq N_0.$$
Since $\K$ is uncountable 
$$\bigcup_{N\geq N_0}V_N\subsetneq F.$$
This is a well known fact (see for instance Exercice 5.10, \cite{L} p. 76). This implies that 
$$A_k:=\bigcap_{N}C_N^k\neq \emptyset.$$
By definition $y^{(k)}\in A_k$ if and only if for every $l\geq k$ there exists $y^{(l)}\in {\K[x]_{<l}}^m$ such that $\pi_{l,k}(y^{(l)})=y^{(k)}$. In particular we have that
$$\pi_{l,k}(A_l)=A_k\ \ \ \forall l\geq k.$$
Thus for a given  $y^{(k)}\in A_k$, there exists an element $y^{(k+1)}\in A_{k+1}$ such that $\pi_{k+1,k}(y^{(k+1)})=y^{(k)}$. By induction there exists a sequence of $y^{(l)}\in A_l$, for every $l\geq k$, such that 
$$\pi_{l+1,l}(y^{(l+1)})=y^{(l)}.$$
At the limit we obtain $y\in\K[[x]]^m$ such that $G(y)\in (x)^l$ for every $l\geq k$, i.e. $G(y)=0$.
\end{proof}


\begin{thebibliography}{00}

\bibitem[H]{H} H. Hauser, The classical Artin approximation Theorems, \textit{Bull. AMS},  Published electronically: June 13, 2017, http://dx.doi.org/10.1090/bull/1579.

\bibitem[L]{L} Q. Liu, \textit{Algebraic geometry and arithmetic curves},  Oxford Graduate Texts in Mathematics, 6. Oxford Science Publications. Oxford University Press, Oxford, 2002. 

\bibitem[R]{R} G. Rond, Artin Approximation, arXiv:1506.04717.
 \end{thebibliography}
\end{document}